\documentclass{amsart}
\usepackage{amsmath, amsthm,fullpage, color}
\usepackage{graphicx, amsaddr}

\usepackage{float}

\newtheorem{theorem}{Theorem}
\newtheorem{corollary}[theorem]{Corollary}

\newtheorem{lemma}{Lemma}

\pagestyle{plain}

\setlength{\textwidth}{6.5in}
\setlength{\textheight}{9in}


 \title{Minimum degree condition for a graph to be knitted}

\author{Runrun Liu$^{1}$,  Martin Rolek$^{2}$,    Gexin Yu$^{1,2}$}

\address{
$^{1}$\small School of Mathematics and Statistics, Central China Normal University, Wuhan, Hubei, China.\\
$^2$\small Department of Mathematics, The College of William and Mary, Williamsburg, VA, 23185, USA.
}

\thanks{The research of the last author was supported in part by the Natural Science Foundation of China (11728102) and the NSA grant H98230-16-1-0316.}

\email{gyu@wm.edu}

 \date{}

 \begin{document}
 \maketitle

\begin{abstract}
For a positive integer $k$, a graph is $k$-knitted if for each subset $S$ of $k$ vertices, and every partition of $S$ into (disjoint) parts $S_1, \ldots, S_t$ for some $t\ge 1$, one can find disjoint connected subgraphs $C_1, \ldots, C_t$ such that $C_i$ contains $S_i$ for each $i\in [t]:=\{1,2,\ldots, t\}$.   In this article, we show that if the minimum degree of an $n$-vertex graph $G$ is at least $n/2+k/2-1$ when $n\ge 2k+3$, then $G$ is $k$-knitted.  The minimum degree is sharp.  As a corollary, we obtain that $k$-contraction-critical graphs are $\left\lceil\frac{k}{8}\right\rceil$-connected.
\end{abstract}

\section{Introduction}
Linkage structure plays an important role in the study of graph minors. For an integer $k\ge 2$,  a graph $G$ is {\em $k$-linked}  if for every $2k$ vertices $u_1, \ldots, u_k, v_1, \ldots, v_k$, one can find $k$ internally disjoint paths $P_1, \ldots, P_k$ such that $P_i$ connects $u_i$ and $v_i$ for each $i\in [k]$.  Clearly,  a $k$-linked graph is $k$-connected. It has been an interesting problem to determine the function $f(k)$ such that $f(k)$-connected graphs are $k$-linked.

After a series of papers by Jung~\cite{J70}, Larman and Mani~\cite{LM74}, Mader~\cite{M67}, Robertson and Seymour~\cite{RS95}, Bollob\'as and
Thomason~\cite{BT96}, and Kawarabayashi, Kostochka, and the third author~\cite{KKY06}, it was shown by Thomas and Wollan~\cite{TW05} that $f(k)\le 10k$, which is the best current result.  Among these papers, Bollob\'as and Thomason~\cite{BT96} gave the first linear upper bound for $f$,  namely, $f(k)\leq 22k$.  In the proof of Bollob\'as and Thomason, they introduced the notion of knitted graphs.

For $1\le m\le k\le |V(G)|$, a graph $G$ is {\em $(k,m)$-knit} if for any set $S$ of $k$ vertices of $G$ and any partition $S_1, \ldots, S_t$ of $S$ into $t\ge m$ non-empty parts, $G$ contains vertex-disjoint connected subgraphs $C_1, \ldots, C_t$ such that $S_i\subseteq V(C_i)$ for $i\in [t]$.  Clearly, a $(2k, k)$-knit graph is $k$-linked.  Bollob\'as and Thomassen~\cite{BT96} proved that if a $k$-connected graph $G$ contains a minor $H$,  where $H$ is a graph with minimum degree at least $0.5(|H|+\lfloor 5k/2\rfloor-2-m)$,  then $G$ is $(k, m)$-knit.   They used this result to show that $f(k)\le 22k$.

Kawarabayashi and the third author~\cite{KY13} used the knitted property of graphs to study the connectivity of contraction-critical graphs.  A graph $G$ is {\em $k$-contraction-critical} if the chromatic number of $G$ is $k$, and any proper minor of $G$ is $(k-1)$-colorable.   The famous Hadwiger's Conjecture~\cite{H43} states that the only $k$-contraction-critical graph is $K_k$.  The connectivity of these graphs has been a crucial part in the study of Hadwiger's Conjecture and related problems.  Dirac~\cite{D60} proved that any $k$-contraction-critical graph is $5$-connected for $k \ge 5$.
Mader~\cite{M68} extended this and showed that any $6$-contraction-critical graph is $6$-connected, and any $k$-contraction-critical graph is $7$-connected for $k \ge 7$.  The first general result was found by Kawarabayashi~\cite{K07}, who proved that any $k$-contraction-critical graph is $\left\lceil\frac{2k}{27}\right\rceil$-connected.   This was improved later by Kawarabayashi and the third author~\cite{KY13}, who showed that such graphs are $\left\lceil\frac{k}{9}\right\rceil$-connected. Very recently, we~\cite{LRY18} showed that any $k$-contraction-critical graph is $8$-connected for all $k\ge 15$.

In~\cite{KY13} and~\cite{LRY18}, one needs to show a small dense graph is $k$-knitted, where a graph is {\em $k$-knitted} if it is $(k, m)$-knit for all $m \in [k]$.  The following result is used in~\cite{KY13}.

\begin{theorem}[\cite{FGKLSS03}]\label{k-ordered}
For every graph $G$ with order $n\ge 2k\ge 2$, if $d(x)+d(y)\ge n+\frac{3k-9}{2}$ for every pair of non-adjacent vertices $x$ and $y$, then $G$ is $k$-ordered, where a graph is {\em $k$-ordered} if  for every $k$ vertices of given order,  there is a cycle containing the $k$ vertices in the given order.
\end{theorem}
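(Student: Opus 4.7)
The plan is to argue by contradiction via an extremal cycle, combined with an Ore-type rearrangement. First I would record the structural consequences of the degree-sum hypothesis: by Ore's theorem the bound $d(x)+d(y)\geq n+(3k-9)/2\geq n$ (valid for $k\geq 3$) already forces $G$ to be Hamiltonian, and the implied minimum degree $\delta(G)\geq (n+(3k-9)/2)/2$ pushes $G$ to be highly connected, in particular $(k-1)$-connected for $n\geq 2k$. These facts will serve as background throughout the argument.

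Next, fix vertices $v_1,\ldots,v_k$ in the prescribed cyclic order and let $C$ be a cycle in $G$ containing every $v_i$ that maximizes the number of consecutive index-pairs $(v_i,v_{i+1})$ (indices mod $k$) appearing on $C$ in the correct relative cyclic order. Such $C$ exists by Hamiltonicity. If this maximum equals $k$ we are done, so suppose it is smaller and fix a misordered pair, say $(v_1,v_2)$: the arc of $C$ from $v_1$ to $v_2$ in the intended direction contains some other $v_j$'s in a wrong position.

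The heart of the argument is the rerouting step. On a suitable subarc of $C$ I would hunt for a pair of non-adjacent vertices $x,y$ whose combined degree sum is large enough to create an Ore-type crossing of edges $xx^+$ and $yy^+$ along $C$; reversing one of the resulting arcs then produces a new cycle that still contains all the $v_j$'s but has a strictly greater count of consecutive pairs in correct cyclic order, contradicting the extremality of $C$. The Ore-slack $(3k-9)/2$ beyond the Hamiltonicity threshold is exactly what is needed so that the rerouted path can be forced to avoid each of the other $k-2$ prescribed vertices as an internal vertex: heuristically, each forbidden $v_j$ costs a constant amount of degree budget in the counting, and $3k/2$ is the right magnitude once the baseline Ore threshold and small corrections are accounted for.

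The principal obstacle I anticipate is the bookkeeping in this rerouting step: one must simultaneously guarantee that (i) the modified cycle still contains every $v_j$, (ii) the chosen misordered pair is corrected, and (iii) no previously-correct pair is broken. The hardest case is the clustering case in which several $v_j$'s lie close together on $C$, absorbing the degree slack all at once; it is precisely this configuration that determines the $3/2$ slope in the degree-sum threshold. An extremal example---with the $v_j$'s roughly evenly spaced on a near-Hamiltonian cycle---should match the tightness of the stated bound, so care must be taken that the slack accounting is not wasteful elsewhere.
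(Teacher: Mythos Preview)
The paper does not prove this theorem at all: Theorem~\ref{k-ordered} is quoted verbatim from \cite{FGKLSS03} (Faudree, Gould, Kostochka, Lesniak, Schiermeyer, and Saito) and is used only as background to motivate the main result. There is therefore no ``paper's own proof'' for you to be compared against; any assessment of your proposal must be on its own merits, or against the original argument in \cite{FGKLSS03}.

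On those merits, what you have written is a plausible strategic outline rather than a proof. The overall architecture---take an extremal cycle maximizing the number of correctly-ordered consecutive pairs, then use the Ore-type slack to reroute and gain one more correct pair---is indeed the standard shape of arguments in this area. But the ``heart of the argument'', your rerouting step, is left entirely heuristic: you assert that the slack $(3k-9)/2$ is ``exactly what is needed'' and that ``each forbidden $v_j$ costs a constant amount of degree budget'', without specifying which vertices $x,y$ you pick, which arc you reverse, or why conditions (i)--(iii) hold simultaneously. The actual proof in \cite{FGKLSS03} requires a careful case analysis of how neighbours of the non-adjacent pair distribute among the arcs between consecutive $v_j$'s, and the constant $3/2$ emerges from a specific counting, not from a soft pigeonhole. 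Your sketch also glosses over the small-$k$ cases ($k\le 2$), where the degree-sum hypothesis can drop below $n$ and Ore's theorem does not apply directly. As written, the proposal identifies the right extremal object and the right mechanism, but the substantive work---making the rerouting precise and verifying that $(3k-9)/2$ suffices---remains to be done.
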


It is worth noting that every $k$-ordered graph is $k$-knitted.  For $n\ge 5k$,  Kostochka and the third author~\cite{KY05} showed that a graph $G$ with minimum degree at least $(n+k)/2-1$ is $k$-ordered, which is stronger than Theorem~\ref{k-ordered} for large $n$.  One can get better results than $\left\lceil\frac{k}{9}\right\rceil$-connected for $k$-contraction-critical graphs if this minimum degree condition could be applied.  However, it is not known if the minimum degree condition still holds for $n<5k$.  In this note, we show that the minimum degree condition holds for $n\ge 2k+3$ when we desire the slightly weaker conclusion that the graph is $k$-knitted instead of $k$-ordered.

\begin{theorem}\label{main-degree}
Let $G$ be a graph with $n\ge 2k+3$ vertices,  where $k\ge 5$.  If $\delta(G)\ge (n+k)/2-1$, then $G$ is $k$-knitted. Moreover, the minimum degree condition is sharp.
\end{theorem}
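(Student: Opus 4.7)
My plan is to proceed by induction on $k$. For the base case $k=5$, Theorem~\ref{k-ordered} applies directly because $\delta(G)\ge (n+5)/2-1$ yields $d(x)+d(y)\ge 2\delta(G)\ge n+3=n+(3\cdot 5-9)/2$ for every non-adjacent pair, making $G$ $5$-ordered and thus $5$-knitted. For the inductive step with $k\ge 6$, I first eliminate singleton parts: if some $S_i=\{v\}$, set $C_i:=\{v\}$ and apply the inductive hypothesis to $G-v$ with the restricted partition of $S\setminus\{v\}$; the hypotheses transfer because $\delta(G-v)\ge\delta(G)-1=((n-1)+(k-1))/2-1$ and $n-1\ge 2(k-1)+3$. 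Thus we may assume every $|S_i|\ge 2$, so $t\le k/2$.

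From the minimum degree bound I extract two structural ingredients. First, $G$ is $k$-connected, since $\kappa(G)\ge 2\delta(G)-n+2\ge k$. Second, every pair of non-adjacent vertices $u,v$ satisfies $|N(u)\cap N(v)|\ge 2\delta(G)-(n-2)\ge k$. With these in hand, the main construction is an extremal one: among all tuples $(C_1,\ldots,C_t)$ of vertex-disjoint connected subgraphs of $G$ with $S_i\subseteq V(C_i)$ wherever possible, choose one that maximizes $\sum_i|V(C_i)\cap S_i|$ and, subject to this, minimizes $\sum_i|V(C_i)|$. If every $S_i$ is fully covered, we are done. Otherwise, for some $v\in S_i\setminus V(C_i)$, the extremality forces $X:=\bigcup_{j\ne i}V(C_j)$ to separate $v$ from $V(C_i)$ in $G$, and the plan is to use $k$-connectivity together with the common-neighbor lower bound to produce a rerouting of $C_i$ through $V(G)\setminus X$ that picks up $v$, yielding the desired contradiction.

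The main obstacle will be the bookkeeping in the rerouting step, especially when all $|S_j|=2$ so that the problem behaves like a $(k/2)$-linkage on $k$ prescribed endpoints. The argument must exploit the budget $\sum_j|V(C_j)|\le 2k-t$, valid when each $C_j$ is taken to be a minimum Steiner tree (at most $2|S_j|-1$ vertices), together with the slack $n-k\ge k+3$ and the common-neighbor lower bound $k$, to guarantee that enough vertices remain outside $X\cup S$ to serve as the connectors needed in the reroute.

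For sharpness, take $G$ to be two cliques $K_a$ sharing a common set $T$ of exactly $k-2$ vertices, with $a=\lceil (n+k-2)/2\rceil$. Every vertex of $T$ has degree $n-1$, while every vertex outside $T$ has degree $a-1=(n+k)/2-2$, so $\delta(G)=(n+k)/2-2$, one below the threshold. Letting $u,v$ be unshared vertices from the two cliques and setting $S:=T\cup\{u,v\}$ with partition $\{u,v\},\{w_1\},\ldots,\{w_{k-2}\}$ where $T=\{w_1,\ldots,w_{k-2}\}$, every $u$-$v$ path must cross $T$ but each $w_i$ is reserved as its own singleton part, so $G$ is not $k$-knitted.
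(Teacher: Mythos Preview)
Your base case $k=5$ via Theorem~\ref{k-ordered} is correct, and the singleton-removal reduction is clean and valid: setting $C_i=\{v\}$ and applying induction to $G-v$ does preserve both hypotheses. The sharpness construction also matches the paper's.

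However, the core of the argument---the case where every $|S_i|\ge 2$---is not actually proved. You set up an extremal tuple $(C_1,\ldots,C_t)$ and correctly observe that if some $v\in S_i\setminus V(C_i)$ survives, then $X=\bigcup_{j\ne i}V(C_j)$ separates $v$ from $V(C_i)$, so $|X|\ge k$ by $k$-connectivity. But from there you only announce an intention to ``reroute'' using the common-neighbour bound and the budget $\sum_j|V(C_j)|\le 2k-t$. That budget is not justified: the bound $|V(C_j)|\le 2|S_j|-1$ holds for a minimum Steiner tree in a diameter-$2$ graph, but your $C_j$'s are constrained to be pairwise disjoint, and under that constraint no such independent size bound is available. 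Even granting the budget, the rerouting itself is never carried out; you explicitly flag the bookkeeping as the ``main obstacle'' and leave it open. So the proposal is a plan, not a proof, and the missing step is exactly the substantive part.

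For comparison, the paper takes a rather different extremal setup: it allows each $C_i$ to be \emph{disconnected} with $S_i\subseteq C_i$, minimises first the number of components, then $|C|$. The engine is a vertex-exchange lemma: if $u\notin C$ has $d(u,C_i)\ge |S_i|+1$ then one can swap $u$ for some $v\in C_i\setminus S_i$ keeping $C_i$ connected. This yields the uniform bound $d(u,C)\le 1.5k-1$ for $u\notin C$, and then a careful analysis of the neighbourhoods $A=N(x)\setminus C$, $B=N(y)\setminus C$ (for $x,y$ in different components of $C_1$) and their second neighbourhoods forces a short $x$--$y$ path through $G-C$ or an exchange that reduces the component count. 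None of the paper's machinery appears in your proposal, and the tools you do assemble ($k$-connectivity, common neighbours $\ge k$, the putative Steiner budget) are not shown to be sufficient.
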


For the sharpness, we consider the graph $G$ whose vertex set consists of three disjoint sets $A, B, C$ with $|A|=\left\lfloor\frac{1}{2}(n-(k-2))\right\rfloor$,  $|B|=k-2$,  and $|C|=\left\lceil\frac{1}{2}(n-(k-2))\right\rceil$ so that $A\cup B$ and $C\cup B$ are cliques.  Clearly, the minimum degree of $G$ is $\left\lfloor\frac{n+k}{2}\right\rfloor-2$. If we take $S$ to be $B\cup \{x,y\}$ with $x\in A, y\in C$ and let $\{x,y\}$ be one part of the partition of $S$, then one cannot find a connected subgraph containing $x,y$ that is disjoint from the subgraphs containing other parts of the partition.

By using Theorem~\ref{main-degree} instead of Theorem~\ref{k-ordered}, one can obtain the following result.

\begin{corollary}\label{cor1}
For positive integer $k$, each $k$-contraction-critical graph is $\left\lceil\frac{k}{8}\right\rceil$-connected.
\end{corollary}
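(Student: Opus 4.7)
The plan is to run the proof scheme of Kawarabayashi and Yu~\cite{KY13}, which produced the $\lceil k/9\rceil$-connectedness bound, while inserting Theorem~\ref{main-degree} at the step where they invoked Theorem~\ref{k-ordered}. Since Theorem~\ref{main-degree} replaces the degree threshold $(n+3k-9)/2$ implicit in Theorem~\ref{k-ordered} by the strictly smaller $(n+k)/2-1$, any arithmetic bound on the size of a hypothetical minimum cut that depended linearly on the $3k/2$ term becomes a bound depending on $k/2$, and this is precisely what yields $1/8$ in place of $1/9$.

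I would assume for contradiction that $G$ is $k$-contraction-critical with $\kappa(G)\le \lceil k/8\rceil-1$, fix a minimum cut $T$ with $t:=|T|$, and let $A$ be the smallest component of $G-T$. Set $H=G[A\cup T]$ and $n':=|V(H)|$. Two standard ingredients would be used: first, $\delta(G)\ge k-1$, which implies $d_H(v)\ge k-1$ for every $v\in A$ and forces $n'\ge k$; second, the minimality of $T$ ensures that each $u\in T$ has a neighbour in every component of $G-T$. Known structural properties of $k$-contraction-critical graphs developed in~\cite{KY13} and~\cite{LRY18} then supply an upper bound on $n'$ in terms of $k$ and $t$, and a lower bound on $d_H(u)$ for $u\in T$ that compensates for the $B$-neighbours discarded in passing from $G$ to $H$.

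The central step is to verify the hypotheses of Theorem~\ref{main-degree} for $H$ with the parameter $k$ replaced by $t$: one needs $n'\ge 2t+3$ and $\delta(H)\ge (n'+t)/2-1$. For $v\in A$ the latter reduces to $n'\le 2k-t$, and for $u\in T$ it reduces to an analogous linear inequality; both should follow from the structural bounds above together with $t\le \lceil k/8\rceil-1$. Once these are verified, Theorem~\ref{main-degree} yields that $H$ is $t$-knitted; note that this is all the proof in~\cite{KY13} really requires, the $k$-ordered strengthening provided by Theorem~\ref{k-ordered} never being used beyond its knitted consequence.

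With $H$ being $t$-knitted, the argument closes as in~\cite{KY13}. Contracting $B$ in $G$ to a single vertex produces a proper minor of $G$, hence admits a $(k-1)$-coloring $\phi$ whose restriction to $T$ partitions $T$ into color classes $S_1,\ldots,S_r$; the $t$-knitted property of $H$ yields pairwise vertex-disjoint connected subgraphs $C_1,\ldots,C_r$ in $H$ with $S_i\subseteq V(C_i)$; contracting each $C_i$ to a single vertex and properly $(k-1)$-coloring the resulting proper minor of $G$ in a way compatible with $\phi$ re-expands to a proper $(k-1)$-coloring of $G$, contradicting $\chi(G)=k$. The main obstacle I expect is the degree bookkeeping at the vertices of $T$, whose $H$-degrees are only indirectly controlled through $\delta(G)\ge k-1$ and the minimality of the cut; it is this accounting, not the combinatorics, that ultimately pins down the constant $1/8$.
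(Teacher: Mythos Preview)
Your proposal is correct and matches the paper's approach exactly: the paper states outright that the proof of Corollary~\ref{cor1} is ``almost the line-by-line copy of the proof of Theorem~5 in~\cite{KY13}'', with Theorem~\ref{main-degree} substituted for Theorem~\ref{k-ordered}, and gives no further details. Your sketch of the Kawarabayashi--Yu scheme---minimum cut $T$, the subgraph $H=G[A\cup T]$, verifying $n'\ge 2t+3$ and $\delta(H)\ge (n'+t)/2-1$ via $\delta(G)\ge k-1$ and the structural bounds, then the contraction/coloring contradiction---is precisely what is intended, and your observation that only the knitted consequence of $k$-orderedness was ever used in~\cite{KY13} is exactly why the substitution is legitimate.
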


The proof of Corollary~\ref{cor1} is almost the line-by-line copy of the proof of Theorem 5 in~\cite{KY13}.   With a little more effort, one might be able to get $\left\lceil\frac{k}{6}\right\rceil$, which is claimed to be proved by Chen, Hu and Song~\cite{CHS18} by using a different degree condition.  Very recently,   Chen~\cite{C19} told us that  they also obtain the optimal minimum degree condition for all $n$ (including $n<2k+3$) and their proof is quite complicated (more than 30 pages); by using that, they claim to have a proof to show that each $k$-contraction-critical graph is $\left\lceil\frac{k}{5}\right\rceil$-connected.

\section{Proof of Theorem~\ref{main-degree}}
Let $G$ be a graph with $n\ge 2k+3$ vertices and minimum degree $\delta(G)\ge n/2+k/2-1$, and suppose $G$ is not $k$-knitted for some integer $k\ge 5$.  For a subgraph $T$ and vertex $u$ of $G$, let $d(u, T)$ be the degree of $u$ in $T$.  Then by definition, there is a set $S\subseteq V(G)$ with $|S|=k$ and a partition of $S$ into $t$ nonempty parts $S_1, \ldots, S_t$ such that $G$ does not contain disjoint connected subgraphs $C_1, \ldots, C_t$ such that $S_i\subseteq C_i$ for each $i\in [t]$.  Choose a partial $(k,t)$-knit $C=\cup_{i=1}^ t C_i$ with $S_i\subseteq C_i$ such that:\begin{itemize}
\item[(1)] $|C|<n$;
\item[(2)] subject to (1), the number of components in each $C_i$ is minimized;
\item[(3)] subject to (1) and (2), the number of vertices in $C$ is minimized.
\end{itemize}

By (1), $G-C$ is nonempty.  We will need the following.

\begin{lemma}[\cite{KY13}]\label{subtree}
Let $W$ be a graph. Let $S'$ be a subset of $V(W)$ with $|S'|\ge 2$, and let $F$ be a subtree of $W$ such that $F\supseteq S'$ and all leaves of $F$ belong to $S'$. Let $u\in W-F$, and suppose that $d(u, F)\ge |S'|+2$.  Then $W[V(F)\cup \{u\}]$ contains a subtree $F_0$ with $u\in F_0$ such that $|F_0|<|F|$, $F_0\supseteq S'$ and all leaves of $F_0$ belong to $S'$.
\end{lemma}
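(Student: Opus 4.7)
The plan is to construct $F_0$ by removing at least two non-$S'$ vertices from $F$ and reattaching the resulting forest through the new vertex $u$, which will then play the role of a single internal hub.

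Set $N := N_W(u)\cap V(F)$, so that $|N|\ge |S'|+2$. Since every leaf of $F$ lies in $S'$, $F$ has at most $|S'|$ leaves, giving both $|N\setminus L(F)|\ge 2$ and $|N\setminus S'|\ge 2$; in particular, at least two $F$-neighbours of $u$, say $v_1,v_2$, are internal, non-$S'$ vertices of $F$. I then aim to choose a subset $X\subseteq V(F)\setminus S'$ with $|X|\ge 2$ such that every connected component of $F-X$ contains a vertex of $N\setminus X$, and define
\[
F_0 \;:=\; (F-X)\;\cup\;\{u\}\;\cup\;\bigl\{\,uv_C : C\text{ a component of }F-X,\ v_C\in V(C)\cap N\,\bigr\},
\]
with each representative $v_C$ chosen in $S'\cap V(C)$ whenever possible. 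Each component of $F-X$ is joined to $u$ by exactly one new edge, so $F_0$ is a tree on $|V(F)|-|X|+1\le |V(F)|-1$ vertices containing $S'\cup\{u\}$. Since $X$ consists of at least two internal vertices of the tree $F$, deleting $X$ creates at least two components, so $u$ has degree $\ge 2$ in $F_0$ and is not a leaf. The remaining leaves of $F_0$ are either leaves of $F$ (hence in $S'$) or chosen representatives $v_C$, which lie in $S'$ by the choice rule.

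The main obstacle is to establish the existence of such an $X$. The naive candidate $X=\{v_1,v_2\}$ can fail in either of two ways: removing these two vertices may create a sub-tree of $F$ with no remaining vertex of $N$, or it may turn some non-$S'$ vertex into a leaf of $F-X$ whose degree cannot be boosted by a single $u$-edge. The resolution exploits the strict inequality $|N|\ge |S'|+2 > |L(F)|$ together with the tree structure of $F$: any orphaned sub-tree of $F$ necessarily contains a leaf of $F$, hence an $S'$-vertex, so only boundedly many orphans can ever arise. A careful pigeonhole/exchange argument on the branches of $F$ at $v_1$ and $v_2$---possibly substituting one of $v_1,v_2$ by a neighbouring degree-$2$ non-$S'$ vertex on the $v_1 v_2$-path in $F$, or enlarging $X$ with a third non-$S'$ vertex, or sliding $X$ along $F$ to align better with the positions of the $N$-vertices---produces a valid $X$. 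Once $X$ is fixed, the remaining verifications (tree property, vertex count, and leaf structure of $F_0$) are routine.
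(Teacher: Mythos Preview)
The paper does not prove this lemma; it is quoted from \cite{KY13} and used as a black box, so there is no in-paper argument to compare your plan against.

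Your overall strategy---delete a set $X\subseteq V(F)\setminus S'$ with $|X|\ge 2$ and reconnect the components of $F-X$ through $u$---is a natural one, but the proposal stops exactly where the content lies, and one sub-claim is wrong as stated. The assertion that ``the remaining leaves of $F_0$ are either leaves of $F$ or chosen representatives $v_C$, which lie in $S'$ by the choice rule'' fails for generic $X$: any degree-$2$ vertex $w\notin S'$ that loses an $F$-neighbour to $X$ becomes a leaf of $F-X$, and unless $w$ is itself selected as $v_C$ it survives as a non-$S'$ leaf of $F_0$. Your ``prefer $S'$'' rule for $v_C$ is therefore the wrong priority---one should prefer the \emph{new} non-$S'$ leaves of $F-X$ that happen to lie in $N$---and even with the corrected rule a single component of $F-X$ may acquire two such new leaves while only one can be absorbed by the single edge $uv_C$. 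You half-acknowledge these obstacles and the orphan-component obstacle, but ``a careful pigeonhole/exchange argument \ldots\ produces a valid $X$'' is not a proof. Concretely, take $F$ to be the path on $s_1,a,b,c,d,e,s_2$ in this order with $S'=\{s_1,s_2\}$ and $N=\{a,b,d,e\}$: every two-element $X\subseteq N$ either creates an orphan component or isolates $s_1$ or $s_2$ away from $N$; one must instead take, e.g., $X=\{b,c\}$ with $c\notin N$ and then choose the new leaves $a$ and $d$ (not the $S'$-vertices) as representatives. Producing such an $X$ in general---simultaneously guaranteeing that every component meets $N$ and that every new non-$S'$ leaf can be absorbed---is the entire lemma, and until that argument is actually written out the proposal is an outline rather than a proof.
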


It follows from Lemma~\ref{subtree} that for each $u\in V(G-C)$ and every component $D$ of $C_i$,
\begin{equation}\label{eq-000}
d(u, D)\le |S_i\cap D|+1, \tag{$*$}
\end{equation} and moreover, $d(u, C)\le 1.5k$, where the maximum is attained only if each component contains exactly two vertices in $S$.  We further extend this lemma.

\begin{lemma}[vertex exchange lemma]\label{exchange}
If $u\in V(G-C)$ has $d(u,C_i)\ge |S_i|+1$ for some $i\in [t]$, then there exists $v\in N(u)\cap (C_i-S_i)$, $C_i-v+u$ is connected.  Consequently, $d(u,C_i)\le |S_i|+1$  for $u\in V(G-C)$.
\end{lemma}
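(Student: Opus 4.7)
The plan is to carry out the proof in four stages: reduce to the case that $C_i$ is connected, exhibit a spanning tree $F$ of $C_i$ whose leaves all lie in $S_i$, derive the bound $d(u, C_i) \le |S_i| + 1$ from Lemma~\ref{subtree}, and finally produce the exchange vertex $v$ via a counting argument on the forest obtained from $F$ by removing $N(u)$.

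For the reduction, suppose $C_i$ has components $D_1, \ldots, D_r$ with $r \ge 2$. Minimality forces $|S_i \cap D_j| \ge 1$ for every $j$, since otherwise we could delete $D_j$ and obtain a partial knit with strictly fewer components in $C_i$, contradicting (2). If $u$ had neighbors in only one component, say $D_1$, then \eqref{eq-000} would give $d(u, C_i) = d(u, D_1) \le |S_i \cap D_1| + 1 \le |S_i| - r + 2 \le |S_i|$, contradicting the hypothesis. If instead $u$ has neighbors in $s \ge 2$ components, I unite those components together with $u$ into a single connected piece, reducing the component count of $C_i$ from $r$ to $r - s + 1 < r$; since $\delta(G) \ge (n+k)/2 - 1$ combined with $d(u, C) \le 1.5k$ guarantees $|G - C| \ge 2$, the modified $C$ still satisfies (1), contradicting (2) again. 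Hence $C_i$ is connected.

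I next observe that every $w \in V(C_i) - S_i$ is a cut vertex of $C_i$: otherwise replacing $C_i$ by $C_i - w$ would give a partial knit with the same component counts but smaller $|C|$, contradicting (3). Hence $w$ cannot be a leaf of any spanning tree $F$ of $C_i$, and so all leaves of $F$ lie in $S_i$. The ``consequently'' bound now follows immediately: if $d(u, C_i) \ge |S_i| + 2$, then $d(u, F) \ge |S_i| + 2$ since $V(F) = V(C_i)$, and Lemma~\ref{subtree} supplies a subtree $F_0$ with $u \in V(F_0)$, $S_i \subseteq V(F_0)$, leaves in $S_i$, and $|V(F_0)| < |V(F)|$; swapping $C_i$ for $F_0$ in the partial knit preserves (1) and (2) but strictly decreases $|C|$, contradicting (3).

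The main obstacle is producing the exchange vertex. Set $N = N_G(u) \cap V(F)$, so $|N| \ge |S_i| + 1$ and $|N - S_i| \ge 1$. Call $v \in N - S_i$ \emph{good} if every component of $F - v$ meets $N \setminus \{v\}$; for such $v$, adding $u$ back reconnects all the pieces, so $F - v + u$, and hence its spanning supergraph $C_i - v + u$, is connected. To find a good $v$, consider the components $W_1, \ldots, W_p$ of the forest $F[V(F) - N]$, and for each $j$ let $R_j \subseteq N$ be the set of $F$-neighbors of $W_j$. A component of $F - v$ that misses $N \setminus \{v\}$ is exactly some $W_j$ with $R_j = \{v\}$. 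Suppose no $v \in N - S_i$ is good: then for each such $v$ one can pick a (necessarily distinct) index $j(v)$ with $R_{j(v)} = \{v\}$. Every vertex of $W_{j(v)}$ has its $F$-neighbors confined to $W_{j(v)} \cup \{v\}$ (white neighbors stay inside $W_{j(v)}$ and red neighbors are in $R_{j(v)} = \{v\}$), so every leaf of the subtree $F[W_{j(v)} \cup \{v\}]$ distinct from $v$ is also a leaf of $F$, and hence lies in $S_i - N$. Since the $W_{j(v)}$ are pairwise disjoint, the corresponding leaves are distinct, forcing $|S_i - N| \ge |N - S_i|$, i.e., $|S_i| \ge |N|$, which contradicts $|N| \ge |S_i| + 1$.
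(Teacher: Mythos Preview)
Your argument is correct, and it takes a genuinely different route from the paper in two places.

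For the reduction to connected $C_i$, the paper first proves the connected case and then, given a disconnected $C_i$, locates a component $D_1$ with $d(u,D_1)=|S_i\cap D_1|+1$, applies the connected case inside $D_1$ to find an exchange vertex $v_1$, and swaps $v_1$ for $u$ to merge $D_1$ with a second component~$D_2$; this keeps $|C|$ fixed, so condition~(1) is automatic. You instead dispose of the disconnected case first, simply by adding $u$ to join $s\ge 2$ components; this costs you one extra vertex in $C$, which is why you need (and correctly verify) $|G-C|\ge 2$ via $\delta(G)$ and $d(u,C)\le 1.5k$. Your route is shorter but relies on the ambient degree hypothesis, whereas the paper's swap is size-neutral and would work even without knowing $|G-C|\ge 2$.

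For the connected case, the paper runs a double induction on $|S_i|$ and $|C_i|$: it peels off a leaf path $P_x$, does a case analysis on how many neighbors $u$ has on $P_x$, and recurses. Your counting argument is cleaner and non-inductive: you assign to each bad $v\in N-S_i$ a private component $W_{j(v)}$ of $F-N$, observe that the subtree on $W_{j(v)}\cup\{v\}$ must contain a leaf of $F$ other than $v$, and harvest one distinct element of $S_i-N$ per bad vertex. The resulting inequality $|S_i-N|\ge|N-S_i|$ immediately contradicts $|N|\ge|S_i|+1$. This injection is essentially the same idea that underlies Lemma~\ref{subtree}, but you extract the exchange vertex directly rather than iterating. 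The paper's induction is more constructive in flavor (it tells you where to look for $v$), while your argument is a one-shot existence proof.
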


\begin{proof}
We first assume that $C_i$ is connected.
We use double induction, first on $|S_i|$ then on $|C_i|$.  If $S_i$ has two vertices, then $u$ has three neighbors in $C_i$ which is a path, thus the three neighbors must be consecutive on the path, then we may take the middle neighbor as $v$.  So we let $|S_i|>2$.  As $C_i$ contains a spanning tree whose leaves are in $S_i$, we may assume that $C_i$ is a tree with more than two leaves.  For each leave $x\in C_i$, let $P_x$ be the path from $x$ to the closest vertex with degree at least $3$ or in $S_i$ (not including the closest vertex).  Then $C_i-P_x$ is a tree with $|S_i|-1$ vertices in $S$.

If $u$ has at least two neighbors on some $P_x$, then we may take $v$ to be a neighbor of $u$ not closest to $x$. As $u$ has at least $|S_i|+1\ge 3$ neighbors on $C_i$, $C_i-v+u$ is connected.  So assume that $u$ has at most one neighbor on each $P_x$.  If $u$ has exactly one neighbor on some $P_x$, then $u$ has at least $|S_i|$ neighbors on $C_i-P_x$.  By induction, there exists some vertex $v\in N(u)\cap (C_i-P_x-S_i)$ such that $(C_i-P_x)-v+u$ is connected.  Clearly, $C_i-v+u$ is connected as well.   So for each leave $x$ in $C_i$, $u$ has no neighbors in $P_x$.  Let $x'$ be the neighbor of $P_x$ on $C_i-P_x$.  Now let $S_i'=S_i-x+x'$.  Note that $d(u, C_i-P_x)\ge |S_i|+1\ge |S_i'|+1$ and $C_i-P_x$ has fewer vertices than $C_i$. By induction, there is some vertex $v\in C_i-P_x-S_i'$ such that $C_i-P_x-v+u$ is connected.  Now $C_i-v+u$ is connected since $v\not=x'$.

Now we assume that $C_i$ has more than one components.   By \eqref{eq-000}, $d(u,D)\le |D\cap S_i|+1$ for each component $D$ of $C_i$. It follows that $d(u,D_1)=|D_1\cap S_i|+1$ and $d(u,D_2)\ge |D_2\cap S_i|$ for some components $D_1$ and $D_2$ of $C_i$.  By the previous paragraph, there is a vertex $v_1\in N(u)\cap (D_1-S_i)$ such that $D_1-v_1+u$ is connected.  Replace $D_1$ with $D_1-v_1+u$, we get a new subgraph $C_i'$ containing $S_i$, which contains fewer components than $C_i$, since $D_1-v_1+u$ and $D_2$ are one component now.
\end{proof}

We may assume that $x,y\in S_1$ are not in the same component. Let $A=N(x)-C$ and $B=N(y)-C$. And $A_1=N(A)-(A\cup C)$ and $B_1=N(B)-(B\cup C)$.  We further require that
\begin{itemize}
\item[(4)] subject to (1)-(3), $A$ is nonempty, if possible;
\item[(5)] subject to (1)-(4), $B$ is nonempty, if possible; and
\item[(6)] subject to (1)-(5), $A\cup A_1\cup B\cup B_1$ is maximized.
\end{itemize}

By Lemma~\ref{exchange}, $u \in V(G) - C$ satisfies $d(u, C) \le |S| + t$, and when $d(u, C)= |S| + t$,  it must be the case that $d(u, C_1) = |S_1| + 1$, and there exists some $v \in N(u) \cap (C_1 - S_1)$ such that $C_1 - v + u$ is connected.
Making this exchange gives a knit $C'$ with fewer components than $C$ while $|C'| = |C|$, contradicting $(2)$.
Hence $d(u, C) \le |S| + t - 1 \le 1.5k - 1$ for all $u \in V(G) - C$.

\medskip

\begin{lemma}
The sets $A$ and $B$ are nonempty.
\end{lemma}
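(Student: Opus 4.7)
The approach is by contradiction: suppose $A = \emptyset$, which forces $N(x) \subseteq C \setminus \{x\}$; the statement $B \ne \emptyset$ will then follow by the symmetric argument using condition (5). From $N(x) \subseteq C \setminus \{x\}$ one reads $|C| \ge d(x) + 1 \ge \delta(G) + 1 = (n+k)/2$, and hence $|V(G) - C| \le (n-k)/2$. On the other hand $V(G) - C$ is nonempty by (1), and any $u \in V(G) - C$ satisfies $d(u, C) \le 1.5k - 1$ by the paragraph preceding the lemma, so $d(u, V(G) - C) \ge (n - 2k)/2$ gives $|V(G) - C| \ge (n - 2k)/2 + 1$.

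Next I would count common neighbors. For each $u \in V(G) - C$, the standard degree bound gives $|N(u) \cap N(x)| \ge 2\delta(G) - (n - 2) \ge k$, and since $A = \emptyset$ confines $N(x)$ to $C$, this common neighborhood sits inside $C$. At most $k - 1$ of its elements can lie in $S$ (as $x \notin N(x)$ and $u \notin S$), so at least one common neighbor $v$ of $u$ and $x$ lies in $C \setminus S$; say $v \in C_i - S_i$ for some index $i$.

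The plan is then to form the new partial knit $C' = C - v + u$ by swapping $v$ out of $C_i$ and $u$ in. If $C'$ still satisfies (1)--(3), then $v \in N(x) \cap (V(G) - C')$, so $A' \neq \emptyset$ for the configuration $(C', x, y)$, contradicting clause (4). Validity of the swap is given by Lemma \ref{exchange} applied to $u$ and $C_i$, which requires $d(u, C_i) \ge |S_i| + 1$ and produces a specific exchange vertex inside $N(u) \cap (C_i - S_i)$.

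The main obstacle is twofold. First, I must locate a $C_i$ on which the swap is both available and harmless to (2). Minimality (2) together with $c_1 \ge 2$ forces $d(u, C_1) \le |S_1|$ for every $u \in V(G) - C$, so the swap cannot occur inside $C_1$; one needs some $i \ge 2$ with $d(u, C_i) \ge |S_i| + 1$. If no such $i$ existed, then $d(u, C) \le k$, which saturates the earlier estimate on $|V(G) - C|$, forces $V(G) - C$ to be a near-clique and $N(x) = C \setminus \{x\}$, and therefore makes $x$ adjacent to $y$---contradicting the fact that (2) forces $xy \notin E(G)$ when $x$ and $y$ lie in distinct components of $C_1$. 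Second, the exchange vertex of Lemma \ref{exchange} must actually lie in $N(x)$; since the common non-$S$ neighborhood of $u$ and $x$ distributes over the $C_i$'s, I would handle this by pigeonholing (over $u \in V(G) - C$ and over the index $i$) to concentrate these common neighbors inside one $C_i$, or by inspecting the proof of Lemma \ref{exchange} to pick the exchange vertex directly from $N(x) \cap (C_i - S_i)$.
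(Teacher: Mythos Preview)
Your overall strategy---swap some $u \in V(G)-C$ into a $C_i$ in exchange for a vertex $v \in N(x)$, thereby making $A$ nonempty and violating (4)---matches the paper's, and your handling of the first obstacle (forcing some $i \ge 2$ with $d(u,C_i) \ge |S_i|+1$ via the tight inequality chain that ends in $xy \in E(G)$) is correct.

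The genuine gap is the second obstacle, which you name but do not resolve. Lemma~\ref{exchange} only produces \emph{some} exchange vertex $v \in N(u) \cap (C_i - S_i)$; nothing in its proof lets you steer $v$ into $N(x)$, and your pigeonhole suggestion cannot work as stated, since $|N(u) \cap N(x) \cap (C-S)|$ may be as small as~$2$ while neither the index $i$ nor the set of valid exchange vertices within $C_i$ is under control. The paper's key move is the contrapositive of your goal: if the exchange vertex $v$ for a given $i$ \emph{did} lie in $N(x)$, the swap would violate (4), so in fact $v \notin N(x)$---hence every $C_i$ with $d(u,C_i)=|S_i|+1$ contributes a non-neighbor of $x$ inside $C$. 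There are at least $d(u,C)-|S|$ such indices, so together with $x$ and $y$ themselves one obtains $|C| \ge d(x) + (d(u,C)-k+2)$; adding this to $|V(G)-C| \ge d(u)-d(u,C)+1$ gives $n \ge 2\delta(G)+3-k$, the desired contradiction. This non-neighbor count is exactly the missing ingredient.

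A smaller point: the argument for $B \ne \emptyset$ is not purely symmetric. Once $A \ne \emptyset$, you must pick $u \in V(G)-C-A$ so that $A$ survives any swap (otherwise (5) is not violated even if $B'$ becomes nonempty); the paper then disposes separately of the residual case $V(G)-C = A$, forcing $|A|=1$ and reaching a degree contradiction from $n \ge 2k+3$.
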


\begin{proof}
Suppose that $A$ is empty.  Let $u\in V(G)-C$.
We claim that if $d(u,C_i) = |S_i|+1$ for each $i\in[t]$, then $x$ has at least one non-neighbor in $C_i$.  As $u$ has $|S_i|+1$ neighbors in $C_i$,  by Lemma~\ref{exchange},  for some $v\in N(u)\cap (C_i-S_i)$, $C_i-v+u$ is connected.   Now $xv\not\in E(G)$, otherwise, we can make $A$ to be non-empty.

Since $d(u,C_1)\le |S_1|$ and $d(u, C_i)\le |S_i|+1$ for $i\ge 2$, there are at least $d(u,C)-|S|$ components $C_i$ with $d(u, C_i)=|S_i|+1$. Thus $x$ has at least $d(u, C) - |S| + 2$ non-neighbors in $C$ (including $x$ and $y$).
So $|G - C| \ge d(u) + 1 - d(u, C)$, and $|C| \ge d(x) + d(u, C) - |S| + 2$.
It follows that $n = |G| \ge d(u) + d(x) + 3 - k \ge 2 \delta(G) + 3 - k$, that is, $\delta(G) \le \frac{1}{2}(n + k - 3)$, a contradiction.

Now suppose that $A$ is nonempty and $B$ is empty.  If there exists $u\in V(G)-C-A$, then the above argument applies. So $V(G)-C=A$.  Similarly, if $A$ contains more than one vertex, then we may take one of them to be $u$ and apply the above argument to make $B$ nonempty, so $|A|=1$. Let $u\in A$. Then $\delta(G)\le d(u)\le d(u,C)\le 1.5k - 1$, thus $n/2+k/2-1\le 1.5k - 1$, contradicting that $n \ge 2k + 3$. 
\end{proof}

Note that we have not shown that $A$ and $B$ are disjoint.

\begin{lemma}\label{size-of-C}
$n-|C|\ge 6$.
\end{lemma}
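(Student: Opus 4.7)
The plan is to argue by contradiction: suppose $n-|C|\le 5$. I combine a merging/disjointness argument for the sets $A, A_1, B, B_1$, a refined upper bound on $d(a,C)$ for $a\in A$ (and symmetrically for $b\in B$), and a case split on $|G-C|$. The disjointness step shows that a vertex $v$ in any of the intersections $A\cap B$, $A\cap B_1$, $A_1\cap B$, or $A_1\cap B_1$ yields an $xy$-path of length at most $4$ whose interior is contained in $V(G)-C$ and uses $v$ together with at most two auxiliary vertices drawn from $A\cup B$. Adjoining those interior vertices to $C_1$ merges the two components $D_x, D_y$ of $C_1$ that contain $x, y$, producing a partial knit $C'$ with $|C'|\le |C|+3$ and strictly fewer components, which contradicts condition (2) as long as $|C'|<n$, i.e., as long as $n-|C|\ge 4$.

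A similar merging argument then shows that $d(a, D_y)=0$ for every $a\in A$ (and symmetrically $d(b, D_x)=0$ for $b\in B$): a neighbor $w$ of $a$ in $D_y$ would produce the path $x{-}a{-}w{-}\cdots{-}y$ after adjoining $a$ to $C_1$, again merging $D_x$ with $D_y$. I use this to refine the baseline bound $d(u,C)\le 1.5k-1$ for the specific vertex $a\in A$. Writing $d(a,C)=\sum_D d(a,D)$ over components $D$ of $C$, bounding each term by $\min\{|D|,\,|D\cap S|+1\}$ via Lemma~\ref{subtree}, and using that by (3) every component with a single $S$-vertex is a singleton, the upper sum equals $k+c_2$, where $c_1$ (resp. $c_2$) denotes the number of components with exactly one (resp. at least two) $S$-vertices, subject to $2c_2\le k-c_1$. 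Subtracting the contribution of $D_y$ yields $d(a,C)\le 1.5k-2$ when $|D_x\cap S|=|D_y\cap S|=1$ (so $c_1\ge 2$ and $c_2\le (k-2)/2$), and $d(a,C)\le 1.5k-3$ when $|D_y\cap S|\ge 2$; the analogous bounds hold for $b$, so in every sub-case at least one of $d(a,C), d(b,C)$ is at most $1.5k-2$.

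Finally, I combine. If $n-|C|\le 3$, then $d(a,G-C)\le 2$ gives $d(a,C)\ge (n+k)/2-3$; combined with the refined upper bound $\le 1.5k-2$ (for whichever of $a,b$ attains it), this forces $n\le 2k+2$, contradicting $n\ge 2k+3$. If $n-|C|\in\{4,5\}$, then using the vertex with the refined bound gives $|A\cup A_1|\ge 1+d(a,G-C)\ge n/2-k+2\ge 4$, and similarly $|B\cup B_1|\ge 4$; by the first step these are pairwise disjoint subsets of $V(G)-C$, forcing $|G-C|\ge 8$, a contradiction, while any overlap in these sets contradicts condition (2) directly via the merging argument.

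The main obstacle is the refinement in the middle step: the general bound $d(u,C)\le 1.5k-1$ valid for arbitrary $u\in V(G)-C$ only yields $n-|C|\ge 5$, falling one short of the target. The improvement to $d(a,C)\le 1.5k-2$ in the critical case (both $D_x, D_y$ singletons) rests on the structural observation $d(a, D_y)=0$ combined with the singleton property of $S$-only components forced by minimality, and it is precisely what pushes the lower bound from $5$ to $6$.
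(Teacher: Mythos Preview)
Your proof is correct in outline, though it takes a noticeably longer route than the paper's and contains one small overstatement. In your final paragraph, for the case $n-|C|\in\{4,5\}$, you write ``similarly $|B\cup B_1|\ge 4$,'' but in Step~2 you only established that \emph{at least one} of $d(a,C),d(b,C)$ is $\le 1.5k-2$; for the other vertex you have only the general bound $\le 1.5k-1$, which yields $\ge 3$ rather than $\ge 4$. Fortunately $4+3=7>5$ (indeed even $3+3=6>5$ using the general bound on both sides), so the contradiction survives; your refined bound is in fact unnecessary in this sub-case and is genuinely used only to dispose of $n-|C|=3$.

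The paper's argument is much shorter and bypasses the refined bound entirely. It uses only $d(u,C)\le 1.5k-1$ together with $\delta(G)\ge 1.5k+\tfrac12$ to get $d(u,G-C)\ge 2$ for $u\in A$ and $v\in B$. A direct pigeonhole then finishes: when $3\le |G-C|\le 5$, two vertices each having at least two neighbours inside $G-C$ must coincide, be adjacent, or share a common neighbour there, producing an $xy$-path through $G-C$ with at most three interior vertices (at most two when $|G-C|=3$). Adjoining this path to $C_1$ reduces the number of components while still keeping $|C'|<n$, contradicting~(2). So where you split into cases, develop the structural fact $d(a,D_y)=0$, and count via disjointness of $A\cup A_1$ and $B\cup B_1$, the paper simply locates the short path directly; both work, but the paper's route avoids the extra machinery.
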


\begin{proof}
For otherwise, let $n-|C|\le 5$.

We consider the degree of $u\in A\cup B$.  Note that $\delta(G)\le d(u)=d(u,C)+d(u,V(G)-C)\le 1.5k - 1 +d(u,V(G)-C)$ and $\delta(G)\ge (2k+3)/2+k/2-1=1.5k+0.5$.
So we have $d(u,V(G)-C)\ge 2$.  It follows that $V(G)-C$ contains at least three vertices, so $3\le n-|C|\le 5$. Take $u\in A$ and $v\in B$.  Then each of $u$ and $v$ has at least two neighbors in $G-C$.
So if $n-|C|=3$, then $u=v$ or $uv\in E(G)$, and we can add a path between  $x$ and $y$ by using at most two vertices in $G-C$, decreasing the number of components in $C$ while maintaining $|C|<n$. If $4\le n-|C|\le 5$, then $u=v$ or $uv\in E(G)$ or $u,v$ have a common neighbor in $G-C$, and we again can add a path from $x$ to $y$ by using at most three vertices in $G-C$, which decreases the number of components in $C$ while maintaining $|C|<n$, a contradiction.
\end{proof}

\begin{lemma}\label{lem4}
There is no path from $x$ to $y$ through $G-C$ of length at most $6$, and furthermore, $V(G)=C\cup A\cup B\cup A_1\cup B_1$, and the two components of $G-C$ are $A\cup A_1$ and $B\cup B_1$.
\end{lemma}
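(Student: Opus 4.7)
The plan is to tackle the no-short-path claim first by augmenting $C_1$ along such a path (the same device used in Lemma~\ref{size-of-C}), then to use that claim to rule out overlaps and cross-edges between $A\cup A_1$ and $B\cup B_1$, and finally to exploit the maximality clause~$(6)$ to show these two sets exhaust $V(G)-C$.

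\emph{No short path.} Suppose $P$ is an $xy$-path of length at most $6$ with every internal vertex in $V(G)-C$. Then $P$ has at most five internal vertices, and by Lemma~\ref{size-of-C} these can be adjoined to $V(C_1)$ together with the edges of $P$ without violating $|C'|<n$. The resulting $C_1'$ has strictly fewer components than $C_1$ since the formerly distinct components containing $x$ and $y$ are now merged through $P$, while every other $C_j'=C_j$; thus $C'$ contradicts the minimality of components in condition~$(2)$.

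\emph{Disjointness and separation.} A vertex common to $A\cup A_1$ and $B\cup B_1$ lies in one of $A\cap B$, $A\cap B_1$, $A_1\cap B$, or $A_1\cap B_1$, and in each case a short $xy$-path through $G-C$ of length $2,3,3$, or $4$ is immediate. An edge between the two sets, with endpoints in the same four configurations, yields an $xy$-path of length $3,4,4$, or $5$. All of these are at most $6$, so the previous step rules them out. Hence $A\cup A_1$ and $B\cup B_1$ are disjoint and lie in distinct components of $G-C$.

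\emph{Exhaustiveness, the main obstacle.} Suppose some $w\in V(G)-C$ lies outside $A\cup A_1\cup B\cup B_1$, so $w$ has no neighbor in $A\cup B$. Condition~$(6)$ says $|A\cup A_1\cup B\cup B_1|$ is already maximum among valid choices, so it will suffice to construct an admissible partial knit $C'$ still satisfying $(1)$--$(5)$ whose corresponding second-layer set is strictly larger. My strategy is to combine the lower bound $d(w,V(G)-C)\ge n/2-k$ (from the minimum degree hypothesis and the universal bound $d(u,C)\le 1.5k-1$ for $u\in V(G)-C$) with the bound $d(u,C_i)\le |S_i|+1$ from Lemma~\ref{exchange} to locate a vertex $u\in V(G)-C$ and a vertex $v\in C_i-S_i$ whose exchange (via Lemma~\ref{exchange}) leaves $|C'|=|C|$ and every component count of every $C_i'$ unchanged, while bringing $w$ (or some other hitherto unreached vertex) into the new $A_1'\cup B_1'$. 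The delicate part will be to choose the pair $(u,v)$ so that the exchange simultaneously preserves $(1)$--$(5)$ and strictly enlarges the second-layer set; this verification will rest on the fact that $w$ has no neighbor in $A\cup B$ and that every component of $G-C$ contains at least $n/2-k+1$ vertices. Once this is done, the disjointness and separation from the previous paragraph force the two components of $G-C$ to be exactly $A\cup A_1$ and $B\cup B_1$, completing the lemma.
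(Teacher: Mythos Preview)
Your first two steps (ruling out a short $xy$-path via Lemma~\ref{size-of-C}, and deducing that $A\cup A_1$ and $B\cup B_1$ are disjoint and lie in different components of $G-C$) are correct and match the paper. The gap is in the exhaustiveness step. The two facts you propose to lean on---$d(w,G-C)\ge n/2-k$ and the resulting lower bound $n/2-k+1$ on component sizes---do not by themselves produce an exchange that enlarges $A\cup A_1\cup B\cup B_1$. Indeed, three components of $G-C$ each of size at least $n/2-k+1$ are numerically compatible with $|C|\ge k$ whenever $2k+3\le n\le 4k-6$, so a pure component count cannot close the argument; and nothing in your outline explains why the vertex $v$ released by an exchange should be adjacent to something in $A'\cup B'$, which is exactly what is needed for the new second layer to grow.

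The missing idea is this. From the no-short-path claim, $w$ cannot have neighbours in \emph{both} $A_1$ and $B_1$, since a neighbour $a_1\in A_1$ and $b_1\in B_1$ would yield an $xy$-path $x\,a\,a_1\,w\,b_1\,b\,y$ of length~$6$; so without loss of generality $w$ has no neighbour in $B\cup B_1$. Pick any $u\in B$. Then $N(u)\cap(G-C)\subseteq B\cup B_1$, while $N(w)\cap(G-C)$ avoids $A\cup B\cup B_1$, so $C$, $A$, $N(w)\cap(G-C)$, $N(u)\cap(G-C)$ and $\{u,w\}$ are pairwise disjoint. The resulting count gives $|N(w)\cap N(u)\cap C|\ge d(w,C)+d(u,C)-|C|\ge 2\delta(G)+2-n+|A|\ge k+|A|>\sum_i|S_i|$. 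By pigeonhole some $C_i$ contains at least $|S_i|+1$ of these common neighbours, and Lemma~\ref{exchange} (applied to $w$) swaps $w$ into $C_i$ in place of some $w'\in N(w)\cap N(u)\cap(C_i-S_i)$. The released $w'$ is adjacent to $u\in B\subseteq B'$, hence lies in $B'\cup B_1'$, while one checks directly that $A\cup A_1\cup B\cup B_1\subseteq A'\cup A_1'\cup B'\cup B_1'$; this contradicts~$(6)$. Your sketch contains neither the ``$w$ misses one of $A_1,B_1$'' observation nor the common-neighbour count inside $C$, and without them the exchange you hope for cannot be pinned down.
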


\begin{proof}
If there is a path from $x$ to $y$ through $G-C$ of length at most $6$, then we add this path to $C$. Now we add at most $5$ vertices to $C$, whose order is still less than $n$ by Lemma~\ref{size-of-C}, but decrease the number of components in $C$, a contradiction.

For the furthermore part, without loss of generality we may assume that $w\in V(G)-(A\cup B\cup A_1\cup B_1\cup C)$. Note that $w$ has no neighbors in one of $A_1$ or $B_1$, for otherwise, we may add a path from $x$ to $y$ of length $6$ to $C$, a contradiction.   We may assume that $w$ has no neighbors in $B_1$. Take $u\in B$. Then $u$ has no neighbors in $A\cup A_1$.  So $n\ge |C|+|A|+d(w,G-C)+d(u,G-C)+|\{w,u\}|\ge |C|+|A|+2\delta(G)-(d(w,C)+d(u,C))+2$.  It follows that $$d(w,C)+d(u,C)\ge 2\delta(G)+2-n+|C|+|A|\ge k+|A|+|C|.$$
So $|N(w)\cap N(u)\cap C|\ge k+|A|=|S|+|A|\ge 1 + \sum_{i=1}^t |S_i|$. By Pigeonhole Principle, for some $i\in [t]$, $|N(w)\cap N(u)\cap C_i|\ge |S_i|+1$.    It follows that from Lemma~\ref{exchange}, we can find $w'\in N(w)\cap N(u)\cap (C_i-S_i)$ such that $C_i-w'+w$ is connected.  But the replacement of $w'$ with $w$ in $C_i$ makes $A \cup A_1 \cup B\cup B_1$ larger.
\end{proof}

Take $u\in A_1$ if $A_1\not=\emptyset$, otherwise take $u\in A$. Likewise, take $v\in B_1$ if $B_1\not=\emptyset$ and take $v\in B$ if $B_1=\emptyset$.   By Lemma~\ref{lem4}, $u$ has no neighbors in $B\cup B_1$ and $v$ has no neighbors in $A\cup A_1$.  So $d(u)\le d(u,C)+|A\cup A_1|-1$ and $d(v)\le d(v,C)+|B\cup B_1|-1$. It follows that
$$2\delta(G)\le d(u)+d(v)\le d(u,C)+d(v,C)+(|A\cup A_1|+|B\cup B_1|)-2=d(u,C)+d(v,C)+(n-|C|)-2.$$
So we have $$d(u,C)+d(v,C)-|C|\ge 2 + 2\delta(G)-n\ge k=|S|.$$  It follows that $u$ and $v$ have at least $k=|S|$ common neighbors in $C$.  As $u,v$ have no common neighbors in $C_1$, by Pigeonhole Principle, for some $i\in [t]$, $|N(u)\cap N(v)\cap C_i|\ge |S_i|+1.$  It follows from Lemma~\ref{exchange} that we can find a vertex $w\in N(u)\cap N(v)\cap (C_i-S_i)$ for some $i$ such that $C_i-w+u$ and $C_i-w+v$ are connected.

Now if $w$ has a neighbor in $A\cup A_1\cup B\cup B_1-\{u,v\}$, then we can find a path from $x$ to $y$ via $w$ using at most three vertices in $A\cup A_1\cup B\cup B_1-\{u,v\}$.  Now, we have a knit $C'$ with fewer components, while maintaining $|C'|\le |C|+5<n$, a contradiction.
So all neighbors of $w$ are in $C\cup \{u,v\}$.  Replacing $w$ with $u$ in $C_i$, we get a new knit $C'$ so that $w\not\in C'$.  Clearly, the new knit $C'$ satisfies (1)-(3).  However, $$d(w,C')\ge \delta(G)-1\ge \frac{n+k}{2}-1-1\ge \frac{2k+3+k}{2}-2 = 1.5k - 0.5>\sum_{i=1}^t (|S_i|+1) - 1.$$
Therefore, for some $i\in [t]$, $d(w,C_i)>|S_i|+1$, a contradiction to Lemma~\ref{exchange}.
\medskip

{\bf Acknowledgement:}  The authors would like to thank the referees and Zi-Xia Song and Donglei Yang for their valuable comments.

 \end{document}